\newtheorem{thm}{Theorem}
\newtheorem{prop}{Proposition}
\newtheorem{cor}{Corollary}
\newtheorem{remark}{Remark}
\newcommand{\norm}[1]{\left \| {#1}\right \|}
\newcommand{\eqn}[1]{\begin{equation} #1 \end{equation}}
\renewcommand{\epsilon}{\varepsilon}
\newcommand{\eqnnolabel}[1]{\begin{equation*} #1 \end{equation*}}
\newcommand{\R}{\mathbb{R}}
\newcommand{\too}{\longrightarrow}
\author{Hagop Karakazian \thanks{Hagop Karakazian, Department of Mathematics, American University of Beirut, P.O.Box 11-0236, Riad El-Solh, Beirut 1107 2020, Lebanon. (hk93@aub.edu.lb)} \and Toni Sayah \thanks{Toni Sayah, Laboratoire de  Math\'ematiques et Applications, Unit\'ede recherche Math\'ematiques et Mod\'elisation, Facult\'e des Sciences, CAR, Universit\'e Saint-Joseph de Beyrouth, B.P 11-514 Riad El Solh, Beyrouth 1107 2050, Liban. (toni.sayah@usj.edu.lb)} \and Faouzi Triki \thanks{Faouzi Triki, Laboratoire Jean Kuntzmann, UMR CNRS 5224, Universit\'e Grenoble-Alpes, 700 Avenue Centrale, 38401 Saint-Martin-d'H\`eres, France. (faouzi.triki@univ-grenoble-alpes.fr) }}
\title{Recovering the Polytropic Exponent in the Porous Medium Equation: Asymptotic Approach}
\date{}
\begin{document}

\maketitle
\begin{abstract}
\noindent In this paper we consider the time dependent Porous Medium Equation, $u_t = \Delta u^\gamma$ with real polytropic exponent $\gamma>1$, subject to a homogeneous Dirichlet boundary condition. We are interested in recovering $\gamma$ from the knowledge of the solution $u$ at a given large time $T$. Based on an asymptotic inequality satisfied by the solution $u(T)$, we propose a numerical algorithm allowing us to recover $\gamma$. An upper bound for the error between the exact and recovered $\gamma$ is then showed. Finally, numerical investigations are carried out in two dimensions. 
\end{abstract}
\section{Introduction}

Let $\Omega \subset \R^n$, with $n=1,2,3$, be an open domain bounded with boundary $\partial \Omega$. Consider for non-negative initial density $u_0 \in C(\overline{\Omega})$ and real polytropic exponent $\gamma >1$, the following initial-boundary value direct problem for the \textit{Porous Medium Equation}  given by
\eqnnolabel{ \label{PME} \mbox{(PME)}
\begin{cases}
u_t-\Delta u^\gamma =0 & \mbox{ on } \Omega \times \R^+, \\
u(x,0) = u_0(x) & \mbox{ on } \Omega,
\\ u(x,t)=0 & \mbox{ on } \partial \Omega \times \R^+.
\end{cases}}
\noindent This degenerate parabolic equation describes a wide variety of time-dependent phenomena, including gas flow though porous media \cite{muskat37}, plasma diffusion in a magnetic field \cite{longmire63}, and biological population dynamics \cite{gurtin77} (see also Chapter 2 of \cite{vazquez07} and references therein). \\

%
%
\noindent It is well-known (see \cite{vazquez07}) that, under some regularity assumptions on $\Omega$ and $u_0 \geq 0$, the direct (PME) problem has a unique and stable weak solution. Furthermore, it is shown in \cite{aronson81,peletier81} that, this solution is continuous, and in finite time becomes positive and smooth on the entire domain $\Omega$, asymptotically behaving like a separation-of-variables solution to direct (PME) problem. This asymptotic relation serves the basis of our work on recovering $\gamma$ from the knowledge of the solution at some large time $T$. More specifically, we solve the following inverse problem:\\

\noindent \textbf{Inverse (IPME) Problem.} Given a positive measurement $u_T \in L^\infty(\Omega)$ at a large time $T$, recover $\gamma>1$ so that the solution $u(x,t)$ to the direct (PME) problem is such that $u(x,T)=u_T(x)$. \\
%
%

\noindent Many works in literature focus on inverse problems corresponding to Porous Medium type equations. Recently, C\^arstea et. al. \cite{siampaper,Carstea2021AnIB} studied an inverse boundary value problem for uniquely recovering the coefficient functions in front of $u_t$ and $\nabla u^\gamma$ in the Porous Medium Equation with and without an absorption term. We refer the reader to the references in \cite{siampaper} concerning  inverse problems for quasilinear and semilinear elliptic and parabolic equations. \\

\noindent To the best of our knowledge, there is no work done about recovering the polytropic exponent $\gamma$ for the Porous Medium Equation which is the subject on this work. \\

\noindent The outline on the paper is as follows: In section two, we recall some preliminary results on the direct (PME) problem and the asymptotic behaviour of its solution. In section three, we show an asymptotic relation between $u_T$ and $\gamma$. In section four, we propose a numerical algorithm and show the corresponding error between the exact and recovered polytropic exponents. Finally, in section five we present numerical simulations in two dimensions.
%
%
%
\section{Preliminaries}
\label{chpre}
In this section, we introduce some notation and then recall some existence, regularity and asymptotic properties of the solution to the direct (PME) problem.  \\

\noindent In the rest of the paper, we consider the following hypothesis on the domain $\Omega$ and the initial data $u_0$:\\
\noindent ($H_1$)  $\Omega$ is open, bounded, path-connected, has compact closure with $\partial \Omega$ of class $C^3$. \\
\noindent ($H_2$)  $u_0 \geq 0$ is continuous  on $\overline{\Omega}$ with $u_0=0$ on $\partial \Omega$ and $u_0^\gamma \in C^1(\overline{\Omega})$.\\
%
%

\noindent We also recall the following  sets of  functions defined on a time
interval $(0, T)$ with values in a separable functional space $W$:
\[
L^2(0,T; W) = \Big\{ f \mbox{ measurable on }(0,T);\ \displaystyle \int_{0}^{T}\norm{f(t)}_{W}^2 dt<\infty   \Big\}
\]
and
\[
L^2_{loc}(\R^+; W) = \Big\{ f \in L^2((\tau_1,\tau_2); W); \mbox{ for any } 0<\tau_1 < \tau_2 <\infty \Big\}.
\]
\noindent The following theorem gives the well-posedness of the solution to the direct (PME) problem:
%
%
%
%
%
\begin{thm}
\label{thmdirectexist}
\textbf{(Well-Posedness - Theorem 6.12 in \cite{vazquez07})} \\
For any non-negative $u_0 \in L^1(\Omega)$, the direct (PME) problem has a unique and stable weak solution $u \in C([0,\infty),L^1(\Omega))$ with $u^\gamma \in L^2_{loc}(\R^+,H_0^1(\Omega))$ satisfying:
\eqn{\int_0^\infty \int_\Omega \nabla u^\gamma \nabla \eta - u \eta_t dx dt =0}
\noindent for all $\eta \in C_0^1([0,\infty) \times \overline \Omega)$ which vanishes everywhere for $0<t<\tau$ for some $\tau >0$.

\noindent Furthermore when $u_0 \in L^\infty(\Omega)$, the Maximum Principle for parabolic equations implies
\begin{equation}\label{peincmax}
0 \leq u(x,t) \leq \norm{u_0}_{L^\infty(\Omega)}  \quad \mbox{a.e. on } \Omega \times \R^+.
\end{equation}
\end{thm}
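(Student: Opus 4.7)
Since the equation is degenerate parabolic (parabolicity is lost at $u=0$), I would not attempt a direct fixed-point argument but instead proceed by \emph{vanishing viscosity / nondegenerate approximation}. The plan is to build classical solutions to a regularized problem, extract uniform estimates, pass to the limit via compactness and monotonicity, and finally prove uniqueness by an $L^1$-contraction argument à la Oleinik.

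\medskip
\noindent\textbf{Step 1 (Regularization).} I would approximate the problem in two layers. First, mollify $u_0$ to get $u_{0,\epsilon}\in C^\infty(\overline\Omega)$ with $\epsilon\le u_{0,\epsilon}\le \|u_0\|_{L^\infty}+\epsilon$ (assuming for now $u_0\in L^\infty$; the $L^1$ case follows by a further truncation/approximation). Second, replace $s^\gamma$ by $\Phi_\epsilon(s)=s^\gamma+\epsilon s$, so that $\Phi_\epsilon'\ge \epsilon>0$ and the equation $u_t=\Delta\Phi_\epsilon(u)$ becomes uniformly parabolic with strictly positive initial data. Standard quasilinear parabolic theory (Ladyzhenskaya--Solonnikov--Ural'ceva) then yields a unique classical solution $u_\epsilon\in C^{2,1}(\overline\Omega\times(0,\infty))$.

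\medskip
\noindent\textbf{Step 2 (Uniform a priori estimates).} On the regularized problem the maximum principle is classical, so $0<u_\epsilon\le \|u_0\|_{L^\infty}+\epsilon$. Testing with $\Phi_\epsilon(u_\epsilon)$ and integrating by parts gives the energy inequality
\begin{equation*}
\int_\Omega \Psi_\epsilon(u_\epsilon(T))\,dx+\int_0^T\!\!\int_\Omega |\nabla \Phi_\epsilon(u_\epsilon)|^2\,dx\,dt
=\int_\Omega \Psi_\epsilon(u_{0,\epsilon})\,dx,
\end{equation*}
where $\Psi_\epsilon'=\Phi_\epsilon$, which yields $\Phi_\epsilon(u_\epsilon)$ bounded in $L^2_{\mathrm{loc}}(\R^+;H^1_0(\Omega))$ uniformly in $\epsilon$. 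From the equation itself, $\partial_t u_\epsilon$ is controlled in $L^2_{\mathrm{loc}}(\R^+;H^{-1}(\Omega))$. An Aubin--Lions-type compactness lemma then gives strong convergence $u_\epsilon\to u$ in $L^2_{\mathrm{loc}}(\R^+;L^2(\Omega))$ along a subsequence.

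\medskip
\noindent\textbf{Step 3 (Passage to the limit).} The main obstacle is identifying the weak limit of the nonlinear term $\Phi_\epsilon(u_\epsilon)$ as $u^\gamma$. This is where I would exploit the monotonicity of $s\mapsto s^\gamma$: strong $L^2$-convergence of $u_\epsilon$ plus the uniform $L^\infty$ bound gives $\Phi_\epsilon(u_\epsilon)\to u^\gamma$ a.e.\ and then in $L^2_{\mathrm{loc}}$, while weak convergence of the gradients in $L^2$ follows from the energy estimate. This lets me take limits in the weak formulation and produce $u\in C([0,\infty),L^1(\Omega))$ with $u^\gamma\in L^2_{\mathrm{loc}}(\R^+;H^1_0(\Omega))$ satisfying the required identity. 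The $L^\infty$ bound \eqref{peincmax} is preserved in the limit.

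\medskip
\noindent\textbf{Step 4 (Uniqueness and stability; the hard part).} The main difficulty is uniqueness, since subtracting two weak solutions $u_1,u_2$ gives an equation whose linear part $\Delta(u_1^\gamma-u_2^\gamma)$ cannot be tested against $u_1-u_2$ directly because of degeneracy. I would use Oleinik's duality device: factor $u_1^\gamma-u_2^\gamma = a(x,t)(u_1-u_2)$ with $a\ge 0$, and for a fixed smooth $\varphi$ solve a backward regularized dual problem $\psi_t+(a+\delta)\Delta\psi=\varphi$ with zero terminal and boundary data. Plugging $\psi$ as a test function in the difference equation and carefully estimating the commutator term (which vanishes as $\delta\to 0$) yields
\begin{equation*}
\int_0^T\!\!\int_\Omega (u_1-u_2)\varphi\,dx\,dt = 0
\end{equation*}
for every admissible $\varphi$, hence $u_1\equiv u_2$. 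Stability in $C([0,\infty),L^1(\Omega))$ follows from the same machinery applied with $\varphi=\mathrm{sign}(u_1-u_2)$, giving an $L^1$-contraction $\|u_1(t)-u_2(t)\|_{L^1}\le\|u_{1,0}-u_{2,0}\|_{L^1}$. I expect the delicate point to be the justification of the limit $\delta\to 0$ in the dual construction, which is precisely where the degeneracy of the PME bites.
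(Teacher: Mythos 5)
This statement is not proved in the paper at all: it is quoted as a known result (Theorem 6.12 of the cited monograph of Vazquez), so there is no internal proof to compare against. Your outline is, in substance, the standard proof from that reference: approximation by uniformly parabolic problems with strictly positive data, the energy estimate obtained by testing with $\Phi_\epsilon(u_\epsilon)$, compactness plus monotonicity to identify the nonlinear limit, and an $L^1$-contraction giving uniqueness and stability. As a sketch it is sound. Two remarks on where you diverge from, or are less efficient than, the cited source. First, for uniqueness of weak solutions of the homogeneous Dirichlet problem you do not need Oleinik's duality construction: taking the test function $\eta(x,t)=\int_t^T\big(u_1^\gamma-u_2^\gamma\big)(x,s)\,ds$ for $t<T$ (and $\eta=0$ for $t\ge T$) in the difference of the two weak formulations yields
\begin{equation*}
\int_0^T\!\!\int_\Omega (u_1-u_2)\big(u_1^\gamma-u_2^\gamma\big)\,dx\,dt+\frac12\int_\Omega\Big|\int_0^T\nabla\big(u_1^\gamma-u_2^\gamma\big)\,dt\Big|^2dx=0,
\end{equation*}
and both terms are nonnegative by monotonicity of $s\mapsto s^\gamma$, so $u_1=u_2$ with no dual problem and no $\delta\to0$ limit to justify. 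The duality device is reserved for settings where this trick is unavailable. Second, the logical order in your Steps 1 and 4 should be reversed for the $L^1$ theory: the extension from bounded to merely integrable $u_0$, and the membership $u\in C([0,\infty),L^1(\Omega))$, are consequences of first proving the $L^1$-contraction for bounded solutions and then passing to the limit of truncated data in $C([0,T];L^1)$; as written, your Step 1 defers the $L^1$ case to a truncation whose convergence is only secured by the contraction you prove last. Neither point is a fatal gap, but both should be made explicit in a complete write-up.
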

\begin{thm}
\label{thmcontinuity}
\textbf{(Continuity - Theorem 3 in \cite{peletier81})} \\
Under the hypothesis ($H_1$), if $u_0 \geq 0$ is continuous on $\overline{\Omega}$ with $u_0=0$ on $\partial \Omega$, then the weak solution $u$ obtained in Theorem \ref{thmdirectexist} is continuous on $\overline{\Omega} \times [0,\infty)$.
\end{thm}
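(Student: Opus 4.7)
The plan is to decompose the continuity statement into three pieces: interior continuity on $\Omega \times (0,\infty)$, continuity up to the initial slice $t=0$, and continuity up to the lateral boundary $\partial\Omega \times (0,\infty)$. Joint continuity on $\overline{\Omega} \times [0,\infty)$ then follows by combining these three facts with the uniform $L^\infty$ bound \eqref{peincmax} supplied by Theorem \ref{thmdirectexist}.

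First, I would establish interior continuity. Because the equation $u_t = \Delta u^\gamma$ is degenerate wherever $u=0$, classical parabolic regularity theory does not apply directly. The standard remedy is an intrinsic-scaling argument: one works with parabolic cylinders whose time/space aspect ratio is tuned to the local magnitude of $u$, and shows that the essential oscillation of $u$ on a cylinder of intrinsic radius $\rho$ decays geometrically in $\rho$, which yields local H\"older (hence, in particular, continuous) behaviour on $\Omega \times (0,\infty)$. A more elementary alternative, suited to the setting of Peletier's paper, is to compare $u$ directly with explicit Barenblatt-type sub- and super-solutions on small cylinders and use monotonicity of $u \mapsto u^\gamma$ together with the $L^1$--contraction property.

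Second, continuity at $t=0$ follows from the maximum principle and the continuity of $u_0$. Given $\epsilon>0$, one sandwiches $u_0$ between two continuous functions differing by at most $\epsilon$, solves (PME) with these data, and invokes $L^1$--contraction and standard comparison; sending $\epsilon\to 0$ gives $u(\cdot,t)\to u_0$ uniformly on $\overline{\Omega}$ as $t\to 0^+$. For lateral boundary continuity, I would use hypothesis $(H_1)$, which guarantees that $d(x) := \mathrm{dist}(x,\partial\Omega)$ is $C^3$ in a tubular neighbourhood of $\partial\Omega$, together with the compatibility $u_0=0$ on $\partial\Omega$, to construct an explicit upper barrier. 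A natural choice is $\Phi(x) = C\, d(x)^{1/\gamma}$ for $C$ large, which after a short computation is a supersolution of (PME) near $\partial\Omega$ and vanishes on $\partial\Omega$; comparison forces $u(x,t)\to 0$ as $x\to\partial\Omega$, locally uniformly in $t$.

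The hardest step is clearly interior continuity: the degeneracy at the free boundary $\{u=0\}$ obstructs any direct DiGiorgi--Nash--Moser argument, and serious machinery (intrinsic scaling, or carefully designed pointwise barriers as in Peletier's original paper) is required. The initial slice and lateral boundary pieces, by contrast, are relatively routine once the barrier $\Phi$ and the maximum principle of Theorem \ref{thmdirectexist} are in hand.
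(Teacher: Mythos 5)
This theorem is not proved in the paper at all: it is imported verbatim as Theorem 3 of \cite{peletier81}, so there is no internal argument to compare yours against; the cited source establishes continuity by constructing an explicit modulus of continuity through comparison with carefully chosen special solutions. Judged as a self-contained proof, your outline has a sensible three-way decomposition (interior, initial slice, lateral boundary), but each piece has a genuine gap.

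First, the interior step --- which you correctly identify as the heart of the matter --- is only named, not carried out: ``intrinsic scaling'' and ``comparison with Barenblatt-type sub- and super-solutions'' are pointers to a substantial body of technique, and the geometric decay of oscillation across the degeneracy set $\{u=0\}$ is precisely what one would have to establish. Second, your lateral barrier $\Phi(x)=C\,d(x)^{1/\gamma}$ fails on two counts: $\Phi^\gamma=C^\gamma d$ need not be superharmonic (near non-convex portions of $\partial\Omega$ one has $\Delta d>0$ to leading order, so a correction such as $d-\lambda d^2$ is required for $\Phi$ to be a supersolution), and, more seriously, the theorem's hypothesis gives only that $u_0$ is continuous with $u_0=0$ on $\partial\Omega$, with no vanishing rate; for data behaving like $1/|\log d|$ near the boundary, no fixed $C$ makes $\Phi\ge u_0$ there, so the barrier must be built from the modulus of continuity of $u_0$ rather than from a fixed power of $d$. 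Third, at $t=0$ the $L^1$-contraction controls only $\|u(\cdot,t)-v(\cdot,t)\|_{L^1(\Omega)}$ and cannot by itself be upgraded to the uniform convergence $u(\cdot,t)\to u_0$ in $C(\overline\Omega)$ that joint continuity on $\overline\Omega\times[0,\infty)$ demands; one needs an equicontinuity estimate uniform down to $t=0$, which is again the substance of the cited theorem rather than a routine consequence of the maximum principle.
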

\begin{thm}
\label{thmdomfill}
\textbf{(Domain-Filling Property - Proposition 4 in \cite{aronson81})} \\
Under the hypotheses ($H_1$) and ($H_2$) with $u_0 \not \equiv 0$, there exists a finite domain-filling time $T_{fill} \geq 0$ depending only on $u_0$, such that the weak solution obtained in Theorem \ref{thmdirectexist} is stictly positive on $\Omega \times [T_{fill},\infty)$.
\end{thm}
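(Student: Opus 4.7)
The aim is to show that the open positivity set $P(t) := \{x \in \Omega : u(x,t) > 0\}$ is non-decreasing in $t$ and equals $\Omega$ for all $t$ beyond a finite $T_{fill}$ depending only on $u_0$. By $(H_2)$ and $u_0 \not\equiv 0$, continuity of $u_0$ on $\overline\Omega$ yields $\delta, r_0 > 0$ and $x_0 \in \Omega$ with $\overline{B(x_0,r_0)} \subset \Omega$ and $u_0 \geq \delta$ on $B(x_0,r_0)$; hence $B(x_0,r_0) \subset P(0)$.

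The main tool would be comparison with compactly supported Barenblatt (ZKB) subsolutions. Given any $y \in P(t_\ast)$ with $u(y,t_\ast) \geq c > 0$, I would fit a translated Barenblatt profile $\mathcal{B}_y$ whose support at time $t_\ast$ is a small ball around $y$ and satisfies $\mathcal{B}_y(\cdot,t_\ast) \leq u(\cdot,t_\ast)$. As long as $\mathrm{supp}\,\mathcal{B}_y(\cdot,t_\ast+s) \subset \Omega$, the profile $\mathcal{B}_y$ vanishes on $\partial\Omega$ and is a subsolution of the (PME) problem on $\Omega$, so the comparison principle gives
\[
u(\cdot,t_\ast+s) \geq \mathcal{B}_y(\cdot,t_\ast+s) \quad \text{on } \Omega.
\]
Because the Barenblatt support is a ball around $y$ whose radius grows as $s^{1/(n(\gamma-1)+2)}$, this yields simultaneously the monotonicity $P(t_\ast) \subset P(t)$ for $t \geq t_\ast$ and a quantitative outward spreading rate for the positivity set.

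Starting from $B(x_0,r_0) \subset P(0)$ and iterating along a chain of overlapping balls joining $x_0$ to any prescribed $x^\ast \in \Omega$ (permitted by path-connectedness in $(H_1)$), I would deduce that every $x^\ast \in \Omega$ enters $P$ at some finite time $t(x^\ast)$. The main obstacle is to upgrade this pointwise filling into a single $T_{fill}$ valid for all $x^\ast \in \Omega$ at once. For any compact $K \Subset \Omega$, a finite subcovering combined with the uniform Barenblatt spreading rate delivers $T_K < \infty$ such that $K \subset P(t)$ for $t \geq T_K$. Closing the gap near $\partial\Omega$, where the Barenblatt comparison stops being admissible, is the delicate step: I would fix $\varepsilon > 0$ small, use that at time $T_{K_\varepsilon}$ the solution $u$ exceeds some positive constant on the compact set $\{x \in \Omega : d(x,\partial\Omega) = \varepsilon\}$, and then invoke a boundary barrier analysis of the pressure $p = \gamma u^{\gamma-1}/(\gamma-1)$ (which satisfies $p_t = (\gamma-1) p \Delta p + |\nabla p|^2$) together with the $C^3$ regularity of $\partial\Omega$ from $(H_1)$ to propagate strict positivity down to $\partial\Omega$ in finite additional time. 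Choosing $\varepsilon$ appropriately then yields the desired finite $T_{fill}$ depending only on $u_0$.
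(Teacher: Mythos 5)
First, note that the paper does not prove this statement at all: it is imported verbatim as Proposition 4 of Aronson--Peletier \cite{aronson81}, so there is no in-paper proof to compare against. Judged on its own merits, your plan is the standard route to this result (comparison from below with compactly supported Barenblatt subsolutions, retention of positivity, and a chaining argument along overlapping balls using path-connectedness), and the interior part of the argument is sound: the support growth rate $s^{1/(n(\gamma-1)+2)}$ is correct, the comparison is legitimate exactly while the Barenblatt support stays inside $\Omega$, and compactness gives a uniform $T_K$ for each compact $K \subset \Omega$.

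The one genuinely underspecified step is the boundary layer, and your description of it contains a misstatement: you cannot ``propagate strict positivity down to $\partial\Omega$,'' since $u$ vanishes identically on $\partial\Omega$ for all time; what must be shown is that all \emph{interior} points within distance $\epsilon$ of $\partial\Omega$ become positive by a single finite time. The pressure-barrier you invoke for this is not constructed, and it is the only place where the proof could fail. It is also unnecessary: the $C^3$ regularity of $\partial\Omega$ in $(H_1)$ gives a uniform interior ball condition with some radius $r_1>0$, so every point $x^*$ near the boundary lies in a ball $B(z,r_1)\subset\Omega$ whose center $z$ belongs to the fixed compact set $\{x: d(x,\partial\Omega)\ge r_1\}$, on which $u\ge c>0$ at time $T_K$. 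A single Barenblatt profile seated under $c$ on $B(z,r_1/2)$ reaches $x^*$ at a time $s(|x^*-z|)$ that is bounded above by the finite time $S$ at which its support would attain radius $r_1$, while its support remains in $B(z,r_1)\subset\Omega$ throughout; combined with retention of positivity (which you already have from the same comparison), this yields the uniform bound $T_{fill}\le T_K+S$. I would replace the pressure-barrier paragraph with this argument; the rest of the plan, once the chaining and comparison steps are written out with the quantitative lower bounds needed to re-seat each successive Barenblatt, gives a complete proof.
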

%
%
%
%
%
%
%
%
%

\noindent 

\noindent Observe that on $\Omega \times [T_{fill},\infty)$, (PME) becomes a uniformly (non-degenerate) parabolic equation with smooth coefficients, and so $u$ is smooth there. \\

\noindent Apart from weak solutions, there is a family (for $\tau > 0$) of smooth separation-of-variables solutions to \eqref{PME} given by
\eqn{\label{friendlygiant} U_\tau(x,t):= (\tau+t)^{-\frac{1}{\gamma-1}} f(x)}
corresponding to initial data $u_0=\tau^{-\frac{1}{\gamma-1}} f$, where $f\in C^\infty(\Omega)$ is the unique and strictly positive solution of the non-linear eigenvalue problem
\eqn{\label{eigen} \begin{cases} -\Delta f^\gamma = \frac{1}{\gamma-1} f & \mbox{ on } \Omega,
\\ f=0 & \mbox{ on } \partial \Omega,\end{cases}}
\noindent obtained in \cite{aronson81}.\\
%

%

%
\begin{thm}
\label{uTzero}
\textbf{(Universal Decay - Theorems 1 and 2 in \cite{aronson81})} \\
Under the hypotheses ($H_1$) and ($H_2$) with $u_0 \not \equiv 0$, the weak solution $u$ obtained in Theorem \ref{thmdirectexist} satisfies for large time $t$ the following decay property: 
\eqn{\label{decay} \frac{f(x)}{(\tau_0+t)^{\frac{1}{\gamma-1}}} \le u(x,t) \leq \frac{f(x)}{(\tau_1+t)^{\frac{1}{\gamma-1}}}, \quad \mbox{ on } \Omega.}
where $\tau_0$ and $\tau_1$ are positive constants depending on $u_0$ and $\Omega$ (with $\tau_0 > \tau_1$).
\end{thm}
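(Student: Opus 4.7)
The plan is to derive both inequalities by applying the parabolic comparison (maximum) principle for (PME) to the weak solution $u$ and the family of separation-of-variables solutions $U_\tau$ introduced in \eqref{friendlygiant}. Each $U_\tau$ is a smooth, strictly positive classical solution of (PME) on $\Omega\times[0,\infty)$ vanishing on $\partial\Omega$, since $f$ solves the nonlinear eigenvalue problem \eqref{eigen}; consequently, if at some time $t_0$ one has $u(\cdot,t_0)\ge U_\tau(\cdot,t_0)$ (resp.\ $\le$) on $\Omega$, the ordering persists for all $t\ge t_0$.

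For the lower bound I would work from $t_0=T_{fill}$. By Theorem~\ref{thmdomfill}, $u$ is strictly positive on $\Omega$ at that time, and the equation becomes uniformly parabolic on $[T_{fill},\infty)$, so $u$ is smooth there. A Hopf-type estimate for the uniformly parabolic equation satisfied by $u$, together with the analogous behavior of $f$ (both vanish on $\partial\Omega$ at the characteristic PME rate $\text{dist}(x,\partial\Omega)^{1/\gamma}$), shows that $u(\cdot,T_{fill})/f$ is bounded below on $\overline{\Omega}$ by some $c_0=c_0(u_0)>0$. Choosing $\tau_0$ with $(\tau_0+T_{fill})^{-1/(\gamma-1)}\le c_0$ yields $u(\cdot,T_{fill})\ge U_{\tau_0}(\cdot,T_{fill})$, and comparison propagates this to every $t\ge T_{fill}$, giving the left inequality in \eqref{decay}.

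For the upper bound with $\tau_1$ independent of $u_0$, the crucial ingredient is a universal smoothing estimate for (PME): every nonnegative weak solution satisfies $\|u(\cdot,t)\|_{L^\infty(\Omega)}\le C(\Omega,\gamma)\,t^{-1/(\gamma-1)}$, with $C$ depending only on $\Omega$ and $\gamma$ (the Bénilan–Crandall regularizing effect). Coupled with the Aronson–Bénilan semiconvexity bound $\Delta u^{\gamma-1}\ge -C/t$, one can control $u(\cdot,t_*)$ after some universal time $t_*=t_*(\Omega,\gamma)$ and match its boundary decay with that of $f$, producing $u(\cdot,t_*)\le U_{\tau_1}(\cdot,t_*)$ for a $\tau_1$ depending only on $\Omega$ and $\gamma$; comparison from $t_*$ onward then closes the argument, and the condition $\tau_0>\tau_1$ is automatic since the lower envelope lies below the upper one.

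I expect the principal obstacle to be precisely the $u_0$-independence of $\tau_1$: a naive application of comparison produces $u\le U_\tau$ with $\tau$ depending on $u_0$ (by picking $\tau$ small so that $U_\tau(\cdot,0)\ge u_0$), and stripping this dependence demands the full universal smoothing together with a careful matching of boundary layers between $u(\cdot,t_*)$ and $f$. This is where the Aronson–Peletier machinery plays its essential role, while the lower bound is a comparatively routine consequence of Hopf and comparison.
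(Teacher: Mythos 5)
The paper itself does not prove this theorem: it is imported verbatim as Theorems 1 and 2 of \cite{aronson81}, so there is no internal proof to compare against. Your skeleton --- compare $u$ with the separable solutions $U_\tau$ of \eqref{friendlygiant}, using order preservation of the (PME) semigroup --- is indeed the Aronson--Peletier strategy, but two steps as you describe them would not go through.

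For the lower bound you claim that (PME) is uniformly parabolic on all of $\Omega\times[T_{fill},\infty)$ and then invoke a Hopf-type estimate to get $u(\cdot,T_{fill})\ge c_0 f$. Uniform parabolicity fails near $\partial\Omega$, where $u$ vanishes and the diffusivity $\gamma u^{\gamma-1}$ degenerates (it holds only on compact subsets of $\Omega$), so the classical Hopf lemma is not available. The real content of the lower bound is the boundary-rate matching $u(x,t_1)\gtrsim \mathrm{dist}(x,\partial\Omega)^{1/\gamma}\sim f(x)$ at some finite $t_1$, which Aronson--Peletier obtain from explicit barrier subsolutions near $\partial\Omega$ (one can also exploit the B\'enilan--Crandall inequality $u_t\ge -u/((\gamma-1)t)$, i.e.\ monotonicity of $t^{1/(\gamma-1)}u(x,t)$, to propagate such a bound forward in time). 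You name the right conclusion, but the tool you cite does not deliver it. For the upper bound, the version actually stated here (with $\tau_1>0$) is obtained far more cheaply than by the universal smoothing machinery you propose: hypothesis $(H_2)$ gives $u_0^\gamma\in C^1(\overline\Omega)$ with $u_0=0$ on $\partial\Omega$, hence $u_0\le C\,\mathrm{dist}(\cdot,\partial\Omega)^{1/\gamma}\le\tau_1^{-1/(\gamma-1)}f$ for a suitable $\tau_1>0$, and comparison with $U_{\tau_1}$ from $t=0$ finishes. This is precisely relation (4.1) of \cite{aronson81} quoted in Remark \ref{rm1} --- and that $\tau_1$ \emph{does} depend on $u_0$; the ``independent of $u_0$'' claim in the statement (which sits uneasily with Remark \ref{rm1}) really belongs to the weaker universal bound $u\le f\,t^{-1/(\gamma-1)}$, which is the only place the B\'enilan--Crandall regularizing effect you invoke is actually needed.
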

%
%

\noindent Theorem \ref{uTzero} implies the asymptotic form of $u$ as follows.

\begin{thm} \label{thmasympme} \textbf{(Asymptotic Behavior - Main Result of \cite{aronson81})} \\
Under the hypotheses ($H_1$) and ($H_2$), the weak solution $u$ obtained in Theorem \ref{thmdirectexist} satisfies for large time $t$:
\eqn{\label{eqnasympme}  \norm{u(x,t)-(1+t)^{\frac{-1}{\gamma-1}}f(x)}_{L^\infty(\Omega)}\leq \frac{K_\gamma}{(1+t)^{1+\frac{1}{\gamma-1}}} }
\noindent where $K_\gamma>0$ is a constant depending only on $\gamma, u_0$ and $\Omega$. 
\end{thm}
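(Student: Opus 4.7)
The plan is to deduce the asymptotic estimate \eqref{eqnasympme} directly from the two-sided decay bound \eqref{decay} of Theorem \ref{uTzero}, by comparing each of the enveloping separation-of-variables solutions to the reference profile $U_1(x,t)=(1+t)^{-1/(\gamma-1)}f(x)$. The key observation is that $U_{\tau_0}$, $U_{\tau_1}$ and $U_1$ all have the same spatial profile $f(x)$, so their differences reduce to an elementary one-variable computation in $t$.

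Concretely, writing $\alpha:=1/(\gamma-1)$, I would define $g(\tau):=(\tau+t)^{-\alpha}$ for fixed $t$ and apply the mean-value theorem on the interval with endpoints $\tau$ and $1$:
\begin{equation*}
\bigl|(\tau+t)^{-\alpha}-(1+t)^{-\alpha}\bigr|
=\alpha\,(\xi+t)^{-\alpha-1}\,|\tau-1|
\end{equation*}
for some $\xi$ between $\tau$ and $1$. For $t$ large enough (say $t\ge 2\max(\tau_0,1)$) one has $\xi+t\ge (1+t)/C$ for a constant $C=C(\tau_0,\tau_1)$, hence
\begin{equation*}
\bigl|(\tau+t)^{-\alpha}-(1+t)^{-\alpha}\bigr|\le \frac{C^{\alpha+1}\alpha\,|\tau-1|}{(1+t)^{\alpha+1}}.
\end{equation*}

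Now, \eqref{decay} sandwiches $u(x,t)$ between $U_{\tau_0}(x,t)$ and $U_{\tau_1}(x,t)$, both of which equal $U_1(x,t)$ plus a perturbation of the above form. Thus
\begin{equation*}
\bigl|u(x,t)-(1+t)^{-\alpha}f(x)\bigr|
\le f(x)\,\max_{\tau\in\{\tau_0,\tau_1\}}\bigl|(\tau+t)^{-\alpha}-(1+t)^{-\alpha}\bigr|
\le \frac{C^{\alpha+1}\alpha\,M\,\|f\|_{L^\infty(\Omega)}}{(1+t)^{\alpha+1}},
\end{equation*}
where $M:=\max(|\tau_0-1|,|\tau_1-1|)$. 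Taking the $L^\infty$ norm in $x$ yields \eqref{eqnasympme} with $K_\gamma=C^{\alpha+1}\alpha M\|f\|_{L^\infty(\Omega)}$, which depends only on $\gamma$ (through $\alpha$, $f$, and $\tau_1$), on $\Omega$, and on $u_0$ (through $\tau_0$ and, via Remark \ref{rm1}, possibly $\tau_1$).

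The only delicate point is the dependence of the constant on the initial data and the range of times for which the estimate is valid. Theorem \ref{uTzero} only asserts \eqref{decay} for \emph{large} $t$, so the argument above proves \eqref{eqnasympme} for $t$ beyond some $T_\star$ depending on $u_0$; this is exactly the regime claimed (``for large time $t$''). I expect no real obstacle beyond bookkeeping of the constant $K_\gamma$ and making sure that the threshold time is consistent with that in Theorem \ref{uTzero}.
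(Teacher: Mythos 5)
Your derivation is correct: sandwiching $u$ between $U_{\tau_0}$ and $U_{\tau_1}$ via \eqref{decay} and applying the mean-value theorem in $\tau$ to $(\tau+t)^{-1/(\gamma-1)}$ yields exactly the rate $(1+t)^{-1-\frac{1}{\gamma-1}}$ with a constant of the admissible form. The paper gives no proof of this theorem --- it cites \cite{aronson81} and merely notes that Theorem \ref{uTzero} implies it --- and your argument is precisely the intended elementary deduction from that decay estimate, so there is nothing to object to beyond bookkeeping of $K_\gamma$.
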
 
%
%

%
\section{Inverse Problem: The Asymptotic Method}
\label{asymptotic}

In the rest of the paper, we assume the hypotheses ($H_1$) and ($H_2$)
are satisfied. \\
\noindent We begin with a proposition that gives an asymptotic relation between $u_T$ and $\gamma$. For this purpose, we introduce the function
\begin{equation}\label{compw}
w:=(-\Delta)^{-1} u_T \in H_0^1(\Omega),
\end{equation}
which is the unique solution to Poisson's equation with homogeneous Dirichlet boundary condition.
\begin{prop} \label{keyprop}
Under the hypotheses ($H_1$) and ($H_2$) in dimensions $n=1,2,3$, we have for large time $T$:
\eqn{\label{asymput} \norm{(\gamma-1)u_T^\gamma-\frac{w}{1+T}}_{L^\infty(\Omega)} \leq \frac{C_\gamma}{(1+T)^{2+\frac{1}{\gamma-1}}} 
}
where 
$C_\gamma>0$ is a constant depending only on $\gamma, u_0$ and $\Omega$.
\end{prop}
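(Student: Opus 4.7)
The plan is to compare $u_T$ with the leading-order separation-of-variables profile $F(x) := (1+T)^{-1/(\gamma-1)} f(x)$ and to exploit the eigenvalue equation \eqref{eigen}. Since $-\Delta f^\gamma = f/(\gamma-1)$ is equivalent to $(\gamma-1) f^\gamma = (-\Delta)^{-1} f$, a direct rescaling gives the key identity
\[ (\gamma-1)F^\gamma \;=\; \frac{1}{1+T}\,(-\Delta)^{-1} F. \]
Subtracting this from $(\gamma-1)u_T^\gamma - w/(1+T)$ and using linearity of $(-\Delta)^{-1}$ yields the decomposition
\[ (\gamma-1) u_T^\gamma - \frac{w}{1+T} \;=\; (\gamma-1)\bigl(u_T^\gamma - F^\gamma\bigr) \;-\; \frac{1}{1+T}(-\Delta)^{-1}(u_T - F), \]
which splits the error into a pointwise nonlinear part and an elliptic inverse of the $L^\infty$ residual $u_T-F$.

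Next I would bound each piece separately using the asymptotic estimate \eqref{eqnasympme} of Theorem \ref{thmasympme}, which gives $\norm{u_T-F}_{L^\infty(\Omega)} \le K_\gamma (1+T)^{-1-1/(\gamma-1)}$. For the nonlinear term, the mean-value inequality $|a^\gamma-b^\gamma| \le \gamma \max(a,b)^{\gamma-1}|a-b|$, combined with the universal upper bound in \eqref{decay} (which guarantees both $u_T$ and $F$ are $O((1+T)^{-1/(\gamma-1)})$ uniformly on $\Omega$), produces
\[ (\gamma-1)\norm{u_T^\gamma-F^\gamma}_{L^\infty(\Omega)} \;\le\; C\,(1+T)^{-1}\cdot K_\gamma\,(1+T)^{-1-1/(\gamma-1)}, \]
which is exactly of the required order $O((1+T)^{-2-1/(\gamma-1)})$. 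For the elliptic piece I would invoke the classical $L^\infty\to L^\infty$ bound $\norm{(-\Delta)^{-1}g}_{L^\infty(\Omega)} \le C_\Omega \norm{g}_{L^\infty(\Omega)}$, which on a bounded $C^3$ domain in dimensions $n\le 3$ follows from integrability of the Newtonian kernel; applied to $g=u_T-F$ and divided by $1+T$, this contributes a second term of the same order.

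The key technical point is the use of the sharp pointwise decay in \eqref{decay} to convert $\max(u_T,F)^{\gamma-1}$ into an extra factor of order $(1+T)^{-1}$: without it, the bare maximum-principle bound $\norm{u_T}_{L^\infty}\le\norm{u_0}_{L^\infty}$ would only yield $O((1+T)^{-1-1/(\gamma-1)})$ and the exponent $2+1/(\gamma-1)$ would be unattainable. The dimension restriction $n\le 3$ enters solely through the mapping property of $(-\Delta)^{-1}$ on $L^\infty$, and the final constant $C_\gamma$ can be read off as a combination of $K_\gamma$, $\norm{f}_{L^\infty(\Omega)}$, and the Poisson-solver norm $C_\Omega$, all of which depend only on $\gamma$, $u_0$, and $\Omega$.
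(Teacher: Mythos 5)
Your proposal is correct and follows essentially the same route as the paper: both rest on the rescaled eigenvalue identity $(\gamma-1)f^\gamma=(-\Delta)^{-1}f$, the asymptotic estimate of Theorem \ref{thmasympme} for $u_T-(1+T)^{-1/(\gamma-1)}f$, the decay bound \eqref{decay} to extract the extra factor $(1+T)^{-1}$ from the $(\gamma-1)$-power of $u_T$, and an $L^\infty$ bound for $(-\Delta)^{-1}$ (your difference-of-powers decomposition is just the paper's Taylor expansion written without the intermediate function $c$). The only notable variation is the elliptic step: the paper goes through $H^2$ regularity and the embedding $H^2(\Omega)\subset L^\infty(\Omega)$, which is where the restriction $n<4$ actually enters, whereas your direct $L^\infty\to L^\infty$ bound for $(-\Delta)^{-1}$ holds in every dimension, so attributing the dimension restriction to that mapping property is slightly off (though harmless).
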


\begin{proof}
Setting $t=T$ in  \eqref{eqnasympme}, we obtain
\eqn{\label{e1} (1+T)^{-\frac{1}{\gamma-1}} f = u_T + h \quad \mbox{ a.e. on } \Omega}
with error term $h \in L^\infty(\Omega)$ satisfying 
\begin{equation}\label{hhh}
\begin{array}{l}
\norm{h}_{L^\infty(\Omega)}\leq \displaystyle\frac{K_\gamma}{(1+T)^{1+\frac{1}{\gamma-1}}}.
\end{array}
\end{equation}
Raising both sides of \eqref{e1} to the power $\gamma$, and using Taylor's Theorem on the function $j(\cdot):=(u_T+\cdot)^\gamma$ (which is $C^1$ around 0 because $\gamma>1$ and $u_T > 0$ as $T\geq T_{fill}$), we obtain
\eqn{\label{e2} (1+T)^{-\frac{\gamma}{\gamma-1}} f^\gamma= (u_T + h)^\gamma = u_T^\gamma + \gamma(u_T+c)^{\gamma-1}h \quad \mbox{ a.e. on } \Omega}
for some function $c \in L^\infty(\Omega)$  such that 
\eqn{\label{e2.5} \norm{c}_{L^\infty(\Omega)} \leq \norm{h}_{L^\infty(\Omega)}.}
\noindent On the other hand, $f$ satisfies the following variational formulation of the non-linear eigenproblem \eqref{eigen}:
$$\int_\Omega \nabla[(\gamma-1)f^\gamma] \cdot \nabla v = \int_\Omega f v \quad \forall v \in H_0^1(\Omega).$$
And so through the uniqueness of the Poisson's equation with zero Dirichlet boundary conditions, we have
\eqn{\label{e3} (\gamma-1)f^\gamma = (-\Delta)^{-1} f \quad \mbox{ a.e. on } \Omega,}
or equivalently when multiplying both sides by $(1+T)^{-\frac{1}{\gamma-1}}$,
\eqn{\label{e4} (\gamma-1)(1+T)(1+T)^{-\frac{\gamma}{\gamma-1}}f^\gamma = (-\Delta)^{-1} [(1+T)^{-\frac{1}{\gamma-1}}f] \quad \mbox{ a.e. on } \Omega.}
Substituting equations \eqref{e1} and \eqref{e2} into \eqref{e4}, we get rid of $f$ and obtain
\eqn{\label{e5} (\gamma-1)(1+T)[u_T^\gamma +\gamma(u_T+c)^{\gamma-1}h] = (-\Delta)^{-1} [u_T + h] \quad \mbox{ a.e. on } \Omega.}
Expanding, dividing by $(1+T)$,  rearranging and using \eqref{compw}, we obtain
\eqn{\label{e6} (\gamma-1)u_T^\gamma -\frac{w}{1+T} = \frac{1}{1+T}(-\Delta)^{-1} h-\gamma(\gamma-1)(u_T+c)^{\gamma-1}h \quad \mbox{ a.e. on } \Omega.}
Now via triangle inequality, and the relations  \eqref{e2.5},   \eqref{hhh} and \eqref{decay}, we obtain
\begin{equation}\label{extention1}
\begin{array}{rcl}
\medskip
\displaystyle \norm{(\gamma-1)u_T^\gamma-\frac{w}{1+T}}_{L^\infty(\Omega)} &\leq& \displaystyle \frac{1}{1+T}\norm{(-\Delta)^{-1} h}_{L^\infty(\Omega)}\\
\medskip
&& +\gamma(\gamma-1) (\norm{u_T}_{L^\infty(\Omega)}+\norm{h}_{L^\infty(\Omega)})^{\gamma-1} \norm{h}_{L^\infty(\Omega)}\\
\medskip
&\leq& \displaystyle \frac{1}{1+T}\norm{(-\Delta)^{-1} h}_{L^\infty(\Omega)}\\
&& \displaystyle + \displaystyle \gamma(\gamma-1) \left [\frac{\norm{f}_{L^\infty(\Omega)}}{(\tau_1+T)^{\frac{1}{\gamma-1}}} + \displaystyle\frac{K_\gamma}{(1+T)^{1+\frac{1}{\gamma-1}}}\right ]^{\gamma-1} \norm{h}_{L^\infty(\Omega)}.
\end{array}
\end{equation}

\noindent Setting $\tau_2=\min(\tau_1,1)$, this implies

\begin{equation}\label{extention}
\begin{array}{rcl}
\medskip
\displaystyle \norm{(\gamma-1)u_T^\gamma-\frac{w}{1+T}}_{L^\infty(\Omega)} &\leq& \displaystyle \frac{1}{\tau_2+T}\norm{(-\Delta)^{-1} h}_{L^\infty(\Omega)}\\
\medskip
&& \displaystyle + \displaystyle \gamma(\gamma-1) \left [\norm{f}_{L^\infty(\Omega)}+ \displaystyle\frac{K_\gamma}{\tau_2+T}\right ]^{\gamma-1} \frac{\norm{h}_{L^\infty(\Omega)}}{\tau_2+T},
\end{array}
\end{equation}
\noindent where 
$$\norm{(-\Delta)^{-1} h}_{L^\infty(\Omega)} \leq C_\infty \norm{(-\Delta)^{-1} h}_{H^2(\Omega)} \leq C_\infty C_E \norm{h}_{L^2(\Omega)} \leq C_\infty C_E |\Omega|^{1/2} \norm{h}_{L^\infty(\Omega)}$$
with $C_\infty>0$ is the constant from the continuous embedding $H^2(\Omega)  \subset L^\infty(\Omega)$
when $n<4$, and $C_E>0$ is the elliptic regularity constant. Putting all together, along with the fact that $\frac{K_\gamma}{\tau_2+T} \leq \frac{K_\gamma}{\tau_2}$, we obtain
\begin{equation}\label{almost} 
\begin{array}{rcl}
\medskip
\displaystyle \norm{(\gamma-1)u_T^\gamma-\frac{w}{1+T}}_{L^\infty(\Omega)} &\leq& \Big[ C_\infty C_E |\Omega|^{1/2}+\gamma(\gamma-1)\big(\norm{f}_{L^\infty(\Omega)}+ \displaystyle K_\gamma/\tau_2\big) \Big] \displaystyle \frac{\norm{h}_{L^\infty(\Omega)}}{\tau_2+T}.\\
%
\end{array}
\end{equation}
The last factor in inequality \eqref{almost} can be treated by \eqref{hhh} as follows: ($\tau_2 \le 1$)
\begin{equation}\label{asynptot11}
\displaystyle  \frac{\norm{h}_{L^\infty(\Omega)}}{(\tau_2+T)} \le  \displaystyle\frac{K_\gamma}{(1+T)^{2+\frac{1}{\gamma-1}}} \Big( \frac{1+T}{\tau_2+T}\Big) \le \displaystyle\frac{K_\gamma}{(1+T)^{2+\frac{1}{\gamma-1}}} \Big(\frac{1-\tau_2}{\tau_2}+1 \Big ).
\end{equation}
\noindent From which the desired result follows.
%
\end{proof}
\begin{cor}\label{corr1}
Under the hypothesis of Proposition \ref{keyprop}, let $\gamma$ be a solution to the inverse (IPME) problem (which satisfies \eqref{asymput} for a large time $T$). Then for any real $\alpha >1$ satisfying the following relation:
\eqn{\label{alphaasymp} \norm{(\alpha-1)u_T^\alpha-\frac{w}{1+T}}_{L^\infty(\Omega)} \leq \frac{C_\gamma}{(1+T)^{2+\frac{1}{\gamma-1}}} 
,}
where $C_\gamma$ is given in Proposition \ref{keyprop}, we have the following bound:
\begin{equation}\label{estminv}
\begin{array}{l}
\medskip
\displaystyle |\gamma - \alpha | \le \displaystyle 
 \frac{\hat{C}_\gamma}{1+T}
\end{array},
\end{equation}
where $\hat{C}_\gamma$ is a positive constant depending on $C$, $\gamma$, $u_0$ and $\Omega$.
\end{cor}
\begin{proof}
Let $\gamma$ be a solution to the inverse (IPME) problem satisfying \eqref{asymput} for a large time $T$. Without loss of generality, assume $\alpha \in (1,\gamma]$ is a real number satisfying \eqref{alphaasymp}. 
Then by Proposition \ref{keyprop} and triangle inequality, we obtain for a large time $T$:
\begin{equation}\label{interm}
\begin{array}{rcl}
\norm{(\gamma-1)u_T^\gamma - (\alpha-1)u_T^\alpha}_{L^\infty(\Omega)} &\leq& \displaystyle \norm{(\gamma-1)u_T^\gamma-\frac{w}{1+T}}_{L^\infty(\Omega)}+\norm{(\alpha-1)u_T^\alpha-\frac{w}{1+T}}_{L^\infty(\Omega)} \\
&\leq& \displaystyle \frac{2 C_\gamma }{(1+T)^{2+\frac{1}{\gamma-1}}}.
\end{array}
\end{equation}

\noindent Observe that the function $f_\epsilon (x)=(x-1)\epsilon^x$ is differentiable and strictly decreasing on $(\eta_\epsilon,\infty)$ where $\eta_\epsilon= \displaystyle 1-\frac{1}{\ln \epsilon}$, and so injective there. 
As $f=0$ on $\partial \Omega$,  we can pick an arbitrary $x_0 \in \Omega$ near the boundary $\partial \Omega$ such that for all $x\in \Omega$ with $f(x_0) \le f(x)$, we have for large time $T$ and by using Theorem \ref{uTzero},
\begin{equation}\label{ineg1}
\uline{\epsilon} :=  \frac{f(x_0)}{(\tau_0+T)^{\frac{1}{\gamma-1}}} \le   \frac{f(x)}{(\tau_0+T)^{\frac{1}{\gamma-1}}} \le  u_T(x)
\end{equation}
for a positive  small $\uline \epsilon$  ($0< \uline{\epsilon}  < 1$). We consider $\epsilon=u_T(x)$, $x\in \Omega$ satisfying \eqref{ineg1}, so that $f_\epsilon$ will be injective on $(\eta_{\epsilon},\infty)$ where we can define the corresponding continuous inverse function $f_\epsilon^{-1}$ such that: 
\begin{equation}\label{fp}
(f_\epsilon^{-1})'(y)= \displaystyle \frac{f_\epsilon^{-1}(y) -1}{y(1+(f_\epsilon^{-1}(y) -1)\ln(\epsilon))}, \qquad \forall y\in ]0,f_\epsilon(\eta_\epsilon)[. \end{equation}  
%

\noindent Now by the Mean Value theorem applied to $f_\epsilon^{-1}$ over the interval 
$$I_\epsilon=[f_\epsilon(\gamma), f_\epsilon(\alpha)],$$
there exists a real $\xi \in I_\epsilon$ such that,
\begin{equation}\label{equat11}
\begin{array}{rcl}
\medskip
\displaystyle |\gamma - \alpha | &\le& |(f_\epsilon^{-1})'(\xi)|  \, |(\gamma-1)\epsilon^\gamma - (\alpha-1)\epsilon^\alpha |.
\end{array}
\end{equation}
Note that $f_\epsilon^{-1}(\xi) \in [\alpha,\gamma]$ as $\xi \in I_\epsilon$. Now choose a sufficiently small $\epsilon>0$ so that $\alpha >  1- \displaystyle \frac{2}{\ln(\epsilon)} > \eta_\epsilon$, then we get 
\[
\begin{array}{rcl}
\displaystyle |1+ (f_\epsilon^{-1}(\xi) -1)\ln(\epsilon)| &>& \displaystyle |1+(\alpha-1)\ln(\epsilon)|\\
&>& 1.
\end{array}
\]
Using the above inequality, the fact that $\xi \in I_\epsilon$, Theorem \ref{uTzero} and Relation \eqref{fp} gives the following:
\begin{equation}\label{equat22}
\begin{array}{rcl}
\medskip
|(f_\epsilon^{-1})'(\xi)| &<&   \displaystyle  \frac{\gamma -1}{f_\epsilon(\gamma)} \\
&<& \epsilon^{-\gamma} \\
&<& \uline\epsilon^{-\gamma} = \displaystyle \frac{(\tau_0+T)^{\frac{\gamma}{\gamma-1}} }{f^\gamma(x_0)}
\end{array}
\end{equation}
%
%
Then we get by using the relations \eqref{equat11}, \eqref{equat22} and \eqref{interm} ($\tau_0 > \tau_1 >\tau_2$),
\[
\begin{array}{rcl}
\medskip
\displaystyle |\gamma - \alpha | &\le& \displaystyle  \frac{\tilde{C}_\gamma}{(1+T)}\Big( \frac{ \tau_0 +T}{1+T} \Big)^{\frac{\gamma}{\gamma-1}},
\end{array}
\]
where $\tilde{C}_\gamma = \displaystyle \frac{2 C_\gamma}{f^\gamma(x_0)}$. Let $\tau_3=\min(\tau_0,1)$. Then we get:
\begin{equation}\label{bound11}
\begin{array}{rcl}
\medskip
\displaystyle |\gamma - \alpha | &\le& \displaystyle 
\frac{\tilde{C}_\gamma}{1+T} \Big( \frac{ \tau_0+T}{\tau_3 + T} \Big)^{\frac{\gamma}{\gamma-1}} \le \displaystyle 
\frac{\tilde{C}_\gamma}{1+T} \Big( 1+ \frac{ \tau_0-\tau_3}{\tau_3} \Big)^{\frac{\gamma}{\gamma-1}} .
\end{array}
\end{equation}
Finally repeating the above argument for $\alpha > \gamma $, with the only difference at the level of inequality \eqref{equat22} that $\epsilon^{-\alpha} < \epsilon^{-\gamma}$, we reach to
\[
\begin{array}{l}
\medskip
\displaystyle |\gamma - \alpha | \le \displaystyle 
 \frac{\hat{C}_\gamma}{1+T},
\end{array}
\]
completing the proof.

\end{proof}
%
%
%
%
\begin{remark}
The estimate \eqref{estminv} is very important in the sense that:  for a large $T$, any computed  $\alpha$ satisfying \eqref{alphaasymp} will be very close to the exact $\gamma$ of the direct (PME) problem since the latter also satisfies \eqref{alphaasymp}. 
\end{remark}
\noindent Relation \eqref{asymput} gives an asymptotic estimate satisfied by $u_T$ with $L^\infty(\Omega)$-norm. It is straightforward to obtain similar bound with the $L^p(\Omega)$-norm for all $p\ge 1$.
\begin{prop}\label{assymptL1}
Under the hypotheses ($H_1$) and ($H_2$) in dimensions $n=1,2,3$, we have for large time $T$: $\forall p\ge 1$,
\begin{equation}\label{assL1} 
\norm{(\gamma-1)u_T^\gamma-\frac{w}{1+T}}_{L^p(\Omega)} \leq \frac{C_\gamma |\Omega|^{1/p} }{(1 +T)^{2+\frac{1}{\gamma-1}}},
\end{equation}
where $C_\gamma$ is the constant appearing in Proposition \ref{keyprop}, which depends only on $\gamma, u_0$ and $\Omega$.
\end{prop}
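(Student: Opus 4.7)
The plan is to derive the $L^p$ bound directly from the $L^\infty$ bound of Proposition \ref{keyprop} via the standard continuous embedding $L^\infty(\Omega) \subset L^p(\Omega)$ that holds on any bounded domain. No further analysis of the PDE is needed, and the constant $C_\gamma$ will be the same one as in Proposition \ref{keyprop}; only a factor $|\Omega|^{1/p}$ will appear.

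More concretely, I would set $g := (\gamma-1) u_T^\gamma - \frac{w}{1+T}$, which belongs to $L^\infty(\Omega)$ by Proposition \ref{keyprop} (since $u_T \in L^\infty(\Omega)$ and $w \in H_0^1(\Omega) \cap L^\infty(\Omega)$ by elliptic regularity in dimensions $n\le 3$). Then for any $p \ge 1$ the pointwise bound $|g(x)| \le \|g\|_{L^\infty(\Omega)}$ gives
\begin{equation*}
\|g\|_{L^p(\Omega)}^p = \int_\Omega |g(x)|^p\, dx \le |\Omega| \, \|g\|_{L^\infty(\Omega)}^p,
\end{equation*}
so that $\|g\|_{L^p(\Omega)} \le |\Omega|^{1/p} \, \|g\|_{L^\infty(\Omega)}$.

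Combining this with \eqref{asymput} from Proposition \ref{keyprop} immediately yields the claim \eqref{assL1}. There is essentially no obstacle: the content of the result is entirely in the $L^\infty$ estimate already proved, and the $L^p$ statement is a one-line corollary of the inclusion $L^\infty(\Omega) \hookrightarrow L^p(\Omega)$ on bounded domains. The only thing worth remarking on is that the resulting constant $C_\gamma$ retains the same dependence on $\gamma$, $u_0$ and $\Omega$ as described in Remark \ref{rm3}, and the $p$-dependence enters only through the harmless factor $|\Omega|^{1/p}$.
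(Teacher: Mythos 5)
Your argument is correct and is exactly what the paper intends: the paper omits the proof, remarking just before the proposition that the $L^p$ bound follows "straightforwardly" from the $L^\infty$ estimate of Proposition \ref{keyprop}, and your one-line estimate $\|g\|_{L^p(\Omega)} \le |\Omega|^{1/p}\|g\|_{L^\infty(\Omega)}$ is precisely that step. Nothing further is needed.
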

\noindent As the numerical algorithm introduced in the next section will be based on the $L^1(\Omega)$-norm, we will establish a corollary analogue to Corollary \ref{corr1} with the $L^p(\Omega)$-norm for $p\ge 1$.
\begin{cor}\label{estimat} Under the hypothesis of Proposition \ref{assymptL1}, let $p \ge 1$ and $\gamma$ be a solution to the inverse (IPME) problem (which satisfies \eqref{assL1} for a large time $T$). Then for any real $\alpha >1$ satisfying the following relation:
\begin{equation}\label{assL11} 
\norm{(\alpha-1)u_T^\alpha-\frac{w}{1+T}}_{L^p(\Omega)} \leq \frac{C_\gamma |\Omega|^{1/p}}{(1+T)^{2+\frac{1}{\gamma-1}}},
\end{equation}
we have the following bound:
\begin{equation}\label{estminv1}
\begin{array}{l}
\medskip
\displaystyle |\gamma - \alpha | \le \displaystyle 
 \frac{\hat C_p}{1+T}
\end{array},
\end{equation}    
where $\hat C_p$ is a positive constant depending on $p$, $C$, $\gamma$, $u_0$ and $\Omega$ . 
\end{cor}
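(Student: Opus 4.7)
The plan is to adapt the pointwise mean-value-theorem argument from the proof of Corollary \ref{corr1} to the $L^p$ setting, with the extra ingredient that now we must integrate rather than take a supremum.

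First, I would apply the triangle inequality exactly as in \eqref{interm}, combining \eqref{assL1} and \eqref{assL11} to obtain
\[
\norm{(\gamma-1)u_T^\gamma - (\alpha-1)u_T^\alpha}_{L^p(\Omega)} \le \frac{C + C_\gamma |\Omega|^{1/p}}{(1+T)^{2+\frac{1}{\gamma-1}}}.
\]
Without loss of generality, assume $\alpha \in (1,\gamma]$. Next, I would pick $x_0 \in \Omega$ near $\partial \Omega$ exactly as in Corollary \ref{corr1}, and consider the subset
\[
S := \{ x \in \Omega : f(x) \ge f(x_0)\},
\]
which has positive Lebesgue measure $|S|>0$ and on which Theorem \ref{uTzero} gives the uniform lower bound
\[
u_T(x) \ge \frac{f(x_0)}{(\tau_0+T)^{\frac{1}{\gamma-1}}} =: \uline{\epsilon}.
\]

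Now, on $S$, the pointwise argument from Corollary \ref{corr1} goes through with $\epsilon = u_T(x)$: applying the mean value theorem to $f_\epsilon^{-1}$ on $I_\epsilon=[f_\epsilon(\gamma),f_\epsilon(\alpha)]$ and using \eqref{equat22} yields
\[
|\gamma-\alpha| \le \uline{\epsilon}^{-\gamma}\, \bigl|(\gamma-1)u_T(x)^\gamma - (\alpha-1)u_T(x)^\alpha\bigr| \quad \text{for a.e. } x \in S.
\]
Since the left-hand side is constant in $x$, raising both sides to the $p$-th power and integrating over $S$ gives
\[
|\gamma-\alpha|^p\, |S| \le \uline{\epsilon}^{-\gamma p} \int_{\Omega} \bigl|(\gamma-1)u_T^\gamma - (\alpha-1)u_T^\alpha\bigr|^p\, dx.
\]
Taking the $p$-th root and inserting the triangle-inequality bound above yields
\[
|\gamma-\alpha| \le \frac{\uline{\epsilon}^{-\gamma}}{|S|^{1/p}} \cdot \frac{C + C_\gamma |\Omega|^{1/p}}{(1+T)^{2+\frac{1}{\gamma-1}}}.
\]
Since $\uline{\epsilon}^{-\gamma} = (\tau_0+T)^{\gamma/(\gamma-1)}/f^\gamma(x_0)$, the $T$-dependence simplifies (using $\tau_3 = \min(\tau_0,1)$ as in \eqref{bound11}) to give a factor bounded by a constant times $(1+T)^{-1}$, producing the desired bound \eqref{estminv1}.

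The case $\alpha>\gamma$ is handled exactly as at the end of the proof of Corollary \ref{corr1}, the only modification being $\epsilon^{-\alpha}<\epsilon^{-\gamma}$ in the analogue of \eqref{equat22}, which leaves the final estimate unchanged in form (with a possibly larger constant $\hat C_p$). The main obstacle I expect is making sure that the pointwise application of the mean value theorem is legitimate on $S$ --- namely, that $\alpha > \eta_\epsilon = 1 - 1/\ln\epsilon$ uniformly for $\epsilon = u_T(x) \ge \uline{\epsilon}$. Choosing $x_0$ sufficiently close to $\partial\Omega$ (and $T$ large enough) makes $\uline{\epsilon}$ small enough that $\alpha > 1 - 2/\ln \uline{\epsilon}$, which then gives $\alpha > 1 - 2/\ln \epsilon > \eta_\epsilon$ for every $x \in S$; this is exactly the uniformity needed to both invert $f_\epsilon$ and bound $(f_\epsilon^{-1})'$ by $\uline{\epsilon}^{-\gamma}$ uniformly on $S$.
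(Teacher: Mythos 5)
Your argument follows the paper's proof almost step for step: the same triangle inequality, the same pointwise mean-value-theorem bound with $\epsilon=u_T(x)$, and the same device of raising to the power $p$ and integrating over a positive-measure subset where $u_T$ is bounded below by $\uline{\epsilon}$. The one place where your write-up has a real (though repairable) flaw is the final uniformity claim. The map $\epsilon\mapsto 1-\tfrac{2}{\ln\epsilon}=1+\tfrac{2}{|\ln\epsilon|}$ is \emph{increasing} on $(0,1)$, so from $\epsilon\ge\uline{\epsilon}$ you get $1-\tfrac{2}{\ln\epsilon}\ge 1-\tfrac{2}{\ln\uline{\epsilon}}$; hence making $\uline{\epsilon}$ small and checking $\alpha>1-\tfrac{2}{\ln\uline{\epsilon}}$ does \emph{not} give $\alpha>1-\tfrac{2}{\ln\epsilon}>\eta_\epsilon$ for the larger values $\epsilon=u_T(x)$ on $S$ --- the threshold $\eta_\epsilon$ grows, not shrinks, as $\epsilon$ increases toward $1$. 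What is actually needed is a uniform \emph{upper} bound on $u_T$ over the integration set. This is exactly why the paper works on the two-sidedly trimmed compact set $\mathcal{O}_c=f^{-1}([f(x_0),f(x_1)])$, where $u_T\le\overline{\epsilon}:=f(x_1)/(\tau_1+T)^{\frac{1}{\gamma-1}}<1$ is small. Your one-sided set $S=\{f\ge f(x_0)\}$ can still be used, but the correct justification is that Theorem \ref{uTzero} gives $\sup_\Omega u_T\le\norm{f}_{L^\infty(\Omega)}/(\tau_1+T)^{\frac{1}{\gamma-1}}\to 0$, so for $T$ large enough every value $\epsilon=u_T(x)$ is below the threshold $e^{-2/(\alpha-1)}$ simultaneously; with that substitution the rest of your proof goes through and coincides with the paper's.
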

\begin{proof}
Let $\gamma$  be a solution to the inverse (IPME) problem satisfying \eqref{assL1} for a large time $T$ and let $\alpha>1$ satisfying \eqref{assL11}. Then by the triangle inequality, we obtain the following bound: $\forall p\ge 1$,
\begin{equation}\label{equat33}
\norm{(\gamma-1)u_T^\gamma-(\alpha-1)u_T^\alpha}_{L^p(\Omega)} \leq \frac{2 C_\gamma  |\Omega|^{1/p} }{(1+T)^{2+\frac{1}{\gamma-1}}}.
\end{equation}
Then as in the proof of Corollary \ref{corr1}, we consider the same function $f_\epsilon=(x-1)\epsilon^x$ which is differentiable and strictly decreasing on $(\eta_\epsilon,\infty)$, where $\eta_\epsilon=1-\frac{1}{\ln \epsilon}$. Furthermore, as $f=0$ on $\partial \Omega$, 
we can pick two points $x_0$ and $x_1$ in $\Omega$ near $\partial \Omega$ such that $f(x_0) < f(x_1)$, and then extract  a compact subset $\mathcal{O}_c =f^{-1}([f(x_0),f(x_1)]) \subset \Omega$ 
such that $|\mathcal{O}_c|\ne 0$ and  for large time $T$, 
$$ \uline{\epsilon} :=  \frac{f(x_0)}{(\tau_0+T)^{\frac{1}{\gamma-1}}}  \le u_T(x) \le   \overline{\epsilon} := \frac{f(x_1)}{(\tau_1+T)^{\frac{1}{\gamma-1}}}, \quad \forall x \in \mathcal{O}_c$$  
for arbitrary small $0< \uline{\epsilon} < \overline{\epsilon} < 1$. Then, we consider an $x\in \mathcal{O}_c$, denote $\uline{\epsilon} \le \epsilon:=u_T(x)\le \overline{\epsilon}$, so that $f_\epsilon$ will be injective on the interval $(\eta_{\epsilon},\infty)$. One can follow the same steps of the proof of Corollary \ref{corr1} to get the following bound:
\begin{equation}\label{equat111}
\begin{array}{rcl}
\medskip
\displaystyle |\gamma - \alpha | &\le&
\displaystyle \frac{(\tau_0+T)^{\frac{\gamma}{\gamma-1}} }{f^\gamma(x_0)} \, |(\gamma-1)\epsilon^\gamma - (\alpha-1)\epsilon^\alpha |\\
&\le& \displaystyle \frac{(\tau_0+T)^{\frac{\gamma}{\gamma-1}} }{f^\gamma(x_0)}  \, |(\gamma-1)(u_T(x))^\gamma - (\alpha-1)(u_T(x))^\alpha |.
\end{array}
\end{equation}
Finally, by raising the last inequality to the power $p$, integrating over $\mathcal{O}_c \subset \Omega$, using \eqref{equat33} and then raising again to the power $1/p$, we get by denoting $\tau_3=\min(1,\tau_0)$,
\[
\begin{array}{rcl}
\medskip
|\gamma - \alpha | &\le& \displaystyle 
 \frac{\hat{C}_\gamma}{1+T} \Big(\frac{ \tau_0+T}{1+T} \Big)^{\frac{\gamma}{\gamma-1}} \le \displaystyle 
 \frac{\hat{C}_\gamma}{1+T} \Big(\frac{ \tau_0+T}{\tau_3+T} \Big)^{\frac{\gamma}{\gamma-1}}
\\
&\le& \displaystyle 
 \frac{\hat{C}_\gamma}{1+T} \Big( 1+   \frac{ \tau_0-\tau_3}{\tau_3} \Big)^{\frac{\gamma}{\gamma-1}}.
 \end{array}
\]
Thus we get the desired result.
\end{proof}
%
%
%
\noindent In the sequel, we propose an algorithm for approximating a solution $\gamma$ of the inverse (IPME) problem based on \eqref{assL1} for $p=1$. 

\section{A Numerical Algorithm}
In this section, we will propose an algorithm for approximating the solution of the inverse problem for large time $T$, without the knowledge of the initial data $u_0$. For this purpose, we  introduce the function $F:(1,+\infty) \too \R$  given by:
\begin{equation}\label{FF}
F(\alpha)= (\alpha-1)(1+T) u_T^\alpha - w,
\end{equation}
\noindent through which relation \eqref{assL1} can be rewritten as: 
%
%
%
\begin{equation}\label{Pr}
\norm{F(\gamma)}_{L^1(\Omega)} 
\le  \displaystyle \frac{\hat{C}}{(1+T)^{1+\frac{1}{\gamma-1}}}. 
\end{equation}
%
%
%
Based on \eqref{Pr}, our algorithm searches for $\gamma_m$ such that
\begin{equation}\label{minig}
\norm{F(\gamma_m)}_{L^1(\Omega)}  = \displaystyle \underset{\alpha >1}{\min} \norm{F(\alpha)}_{L^1(\Omega)} .
\end{equation}
\begin{remark}
Under the assumption of Theorem \ref{thmcontinuity}, the function $F$ is continuous with respect to $\alpha > 1$ and  $x\in \overline{\Omega}$. Furthermore as $u_T <1$ for large time $T$, the first term $(\alpha-1)(1+T) u_T^\alpha$  of $F(\alpha)$ tends to $0$ when $\alpha$ tends to $+\infty$. This means $|F(\alpha)|$ increases to $w$ as $\alpha$ tends to $+\infty$. Thus there exists $\gamma_c>1$ such that $F(\alpha) > F(\gamma_c)$ for all $\alpha> \gamma_c$. Also, as $\alpha$ tends to $1$, $|F(\alpha)|$ approaches to $w$. Therefore, the minimization problem \eqref{minig} admits at least one solution $\gamma_m \in (1,\gamma_c)$.
\end{remark}

\noindent Following this remark, the above minimization problem becomes:
\begin{equation}\label{mini}
\norm{F(\gamma_m)}_{L^1(\Omega)}  = \displaystyle \underset{1< \alpha < \gamma_c}{\min} \norm{F(\alpha)}_{L^1(\Omega)} 
\end{equation}
\begin{prop}
Under the hypotheses ($H_1$) and ($H_2$) in dimensions $n=1,2,3$, and for large time $T$, any solution $\gamma$ of the inverse (IPME) problem and any solution $\gamma_m$ of the minimization problem \eqref{mini} satisfy the  following estimate:
\begin{equation}\label{gagm}
\displaystyle |\gamma - \gamma_m | \le \displaystyle 
 \frac{\hat C_1}{1+T}
\end{equation}
where $\hat C_1$ is a positive constant depending on $\gamma_c$, $\gamma$, $u_0$ and $\Omega$.
\end{prop}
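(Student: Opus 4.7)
The plan is to chain Proposition \ref{assymptL1} (applied at $p=1$), the minimizing property of $\gamma_m$, and Corollary \ref{estimat}. Concretely, Proposition \ref{assymptL1} with $p=1$ says that the exact $\gamma$ solving the inverse (IPME) problem satisfies
\[
\norm{(\gamma-1)u_T^\gamma - \tfrac{w}{1+T}}_{L^1(\Omega)} \le \frac{C_\gamma |\Omega|}{(1+T)^{2+\frac{1}{\gamma-1}}},
\]
which after multiplying by $(1+T)$ becomes $\norm{F(\gamma)}_{L^1(\Omega)} \le C_\gamma |\Omega|/(1+T)^{1+\frac{1}{\gamma-1}}$.

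Next, I would argue that $\gamma$ itself lies in the admissible window $(1,\gamma_c)$ for large $T$, so it can serve as a test value in \eqref{mini}. Indeed, the remark preceding \eqref{mini} shows that $\|F(\alpha)\|_{L^1(\Omega)}$ tends to the strictly positive number $\|w\|_{L^1(\Omega)}$ as $\alpha\to \infty$ (and is monotone past $\gamma_c$), while the displayed bound forces $\|F(\gamma)\|_{L^1(\Omega)}\to 0$. So for $T$ sufficiently large we must have $\gamma<\gamma_c$, placing $\gamma$ inside the minimization range. By the very definition of $\gamma_m$,
\[
\norm{F(\gamma_m)}_{L^1(\Omega)} \le \norm{F(\gamma)}_{L^1(\Omega)} \le \frac{C_\gamma |\Omega|}{(1+T)^{1+\frac{1}{\gamma-1}}}.
\]
Dividing by $(1+T)$ rewrites this as
\[
\norm{(\gamma_m-1)u_T^{\gamma_m} - \tfrac{w}{1+T}}_{L^1(\Omega)} \le \frac{C_\gamma |\Omega|}{(1+T)^{2+\frac{1}{\gamma-1}}},
\]
which is exactly the hypothesis \eqref{assL11} of Corollary \ref{estimat} for $\alpha=\gamma_m$, $p=1$ and constant $C=C_\gamma |\Omega|$.

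Applying Corollary \ref{estimat} then yields $|\gamma-\gamma_m| \le \hat C_1/(1+T)$, where $\hat C_1$ inherits its dependence on $\gamma_c$ (through the admissibility window), $\gamma$, $u_0$ and $\Omega$ via the constants $C_\gamma$ and the constant produced by Corollary \ref{estimat}. The one delicate point, and the only real obstacle, is the justification that $\gamma<\gamma_c$ for large enough $T$; beyond that, the proof is just a bookkeeping composition of the estimates already proved. Everything else is an immediate combination of the prior results.
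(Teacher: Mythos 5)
Your proposal is correct and follows essentially the same route as the paper: use the minimizing property of $\gamma_m$ to transfer the $L^1$ bound from $\gamma$ to $\gamma_m$, then invoke Corollary \ref{estimat} with $p=1$ and replace the dependence on $\gamma_m$ by one on $\gamma_c$ since $\gamma_m\in(1,\gamma_c)$. Your extra step justifying that $\gamma$ itself lies in the admissible window $(1,\gamma_c)$ for large $T$ is a point the paper's proof passes over silently, and it is a reasonable addition.
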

\begin{proof}
Let $\gamma$ be a solution of the inverse (IPME) problem (which satisfies \eqref{assL1} for large time $T$) and $\gamma_m$ a solution of \eqref{mini}. Then
\begin{equation}\label{fgm}
\norm{(\gamma_m-1)u_T^{\gamma_m}-\frac{w}{1+T}}_{L^1(\Omega)} \leq \norm{(\gamma-1)u_T^{\gamma}-\frac{w}{1+T}}_{L^1(\Omega)} \leq \frac{C_\gamma |\Omega|}{(1+T)^{2+\frac{1}{\gamma-1}}} 
.
\end{equation}
Now the result follows from Corollary \ref{estimat}, where the dependence of $\hat C_1$ on $\gamma_m$ could be removed and replaced with that on $\gamma_c$, as $1 < \gamma_m < \gamma_c$.
\end{proof}
\begin{remark} One can recover $\gamma$ as a solution $\gamma_m$ of the minimization problem \eqref{mini} which satisfies the error estimate \eqref{gagm}.
\end{remark}

\noindent Let us now propose the corresponding numerical algorithm.\\

\noindent \textbf{Algorithm IP :} Given $u_T$ at a large time $T$,  
\begin{enumerate}
\item  We compute $w$ given by \eqref{compw} solution of the following weak formulation: search $w\in H^1_0(\Omega)$ such that,
\begin{equation}\label{weakw}
\displaystyle \int_\Omega \nabla w \nabla v = \displaystyle \int u_T v, \qquad \forall v \in H^1_0(\Omega).
\end{equation}
\item We search for $\gamma_m$ a solution of the minimization problem \eqref{mini} which approximates a solution  $\gamma$ of the inverse (IPME) problem.\\
\end{enumerate}
\section{Numerical Simulations}
In this section, we verify the results obtained in the previous sections numerically in 2D, with computations carried out using Freefem++ software (see \cite{hecht}). More specifically, for different values of $\gamma$, $T$ and $u_0$, we first compute $u_T$ by linearizing and solving the direct (PME) problem with the Lagrange $P_1$  finite elements method for the space discretization and  semi-implicit Euler method for the time discretization. We then approximate, for each case, a solution $\gamma_m$ of the inverse problem through ``Algorithm IP''. We finally analyze the behaviour of the error $|\gamma - \gamma_m|$ with respect to various magnitudes of $T$ and $u_0$ used. \\

\noindent We consider the square $\Omega=(0,1)^2$ in 2D, on which we construct a regular mesh by dividing each edge into $N$ segments. So the corresponding meshe contains $2N^2$ triangles. \\

\noindent In each execution of the ``Algorithm IP'', we first compute $w$ from $u_T$, by solving the system \eqref{weakw} using the classical $P_1$  Lagrange finite elements with space step $h=1./N$. We then consider a sufficiently large real $\gamma_c$ and look for $\gamma_m \in (1, \gamma_c)$ a solution of \eqref{mini}. 
%
\subsection{2D Simulations}
%
%
We begin this section by generating $u_T$ by solving the direct (PME) problem for $\gamma=3.5$,  $T=1000$, and initial data 
$$u_0(x,y)=10\, xy(1-x)(1-y),$$
using $N=10$ (the mesh step is then $h=1/N=0.1$) and considering the time step $dt=h$. We then  calculate $\norm{F(\alpha)}_{L^\infty(\Omega)} $ for $\alpha \in [1.001, 10]$. The left graph in Figure \ref{curvefg2d}  shows the function $F$ with respect to $\alpha$ while the right graph shows a corresponding zoom on the flat part for $\alpha\in [2,10]$. We can clearly see that it has a minimum $\gamma_m=3.504$ which is close to the exact $\gamma=3.5$. The shape of the curve of $F$ is expected if we look at its expression $F(\alpha)=(\alpha -1)(T+1) u_T^\alpha - w$. The first part of $F$ given as $(\alpha -1)(T+1) u_T^\alpha$ tends to zero when $\alpha $ becomes very big as $u_T(x) < 1$ for large $T$, and consequently the corresponding curve becomes flat near to $0$ for big $\alpha$. Consequently, the curve of $F$ becomes flat equal to $|w|$ for big $\alpha$ and this fact explain the existence of a minimum $\gamma_m$. \\
\noindent Similar curves are obtained when we performed similar simulation with other values of the exact $\gamma$. \\

\begin{figure}[!h]
\begin{center}
\vspace{-.3cm}
\includegraphics[width=8cm]{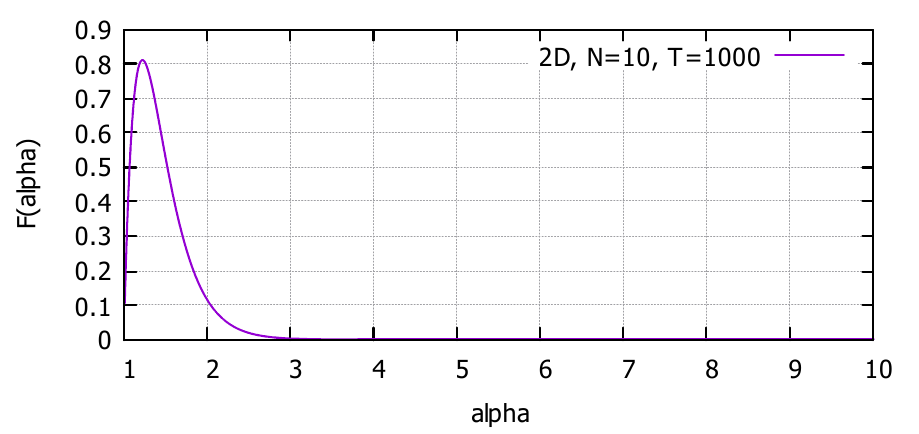} \hspace{.0cm} 
\includegraphics[width=8cm]{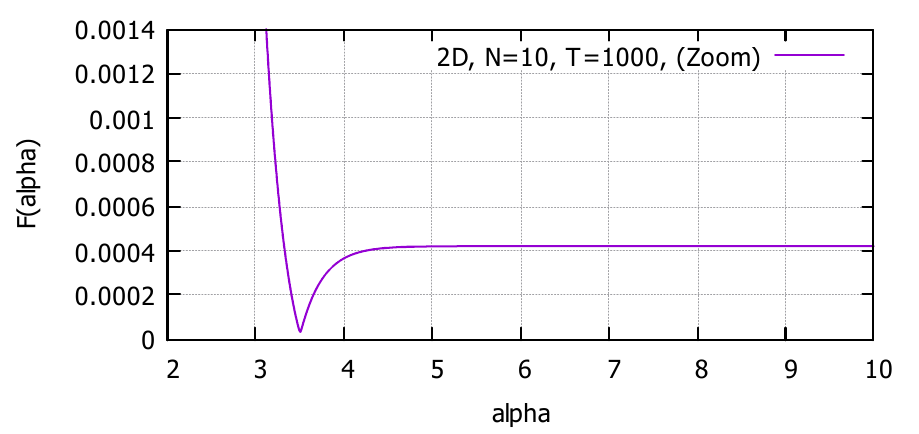}
\vspace{-.2cm}
\caption{Right:  The function $F$ with respect to $\alpha$ for $N=10$, $T=1000$ and $\gamma=3.5$  in $2D$. Left: a  zoom.}\label{curvefg2d}
\vspace{-.5cm}
\end{center}
\end{figure}

\noindent Table \ref{Tab1} shows the  computed $\gamma_m$ with respect to $T$ by using "Algorithm IP".
We deduce that the computed $\gamma_m$ solution of the inverse problem converges to the exact $\gamma$ with respect to the time $T$ which is in coherence with the theoretical results \eqref{gagm}. We note also that the convergence is faster for small values of $\gamma$ and this fact can be interpreted by the fact that the constant appearing in the right hand side of Inequality \eqref{almost} depends on $\gamma$. \\
%

%
%
\begin{table}[h!]
\center
\begin{tabular}{|l|l|l|l|l|l|l|l|l|l|l|l|}
\hline
Algorithm IP ($\gamma_m$) &  T=1 & T=5 & T=10 & T=50  & T=100 &  T=500 & T=$10^3$ & T=$10^4$  \\
 \hline 
($\gamma = 1.1$) & 1.269 & 1.121 & 1.109 & 1.102 & 1.101   & 1.1 & 1.1 & 1.1   \\
\hline
($\gamma = 3.5$) & 3.876 & 3.575  & 3.537 & 3.509 & 3.506 &  3.505 & 3.504 & 3.503   \\
\hline
($\gamma = 6.8$) & 6.592 & 6.653  & 6.711 & 6.784 & 6.795  & 6.804 & 6.803 & 6.802  \\
\hline
($\gamma = 10.2$) & 8.810 & 9.117  & 9.319 & 9.765 &  9.922  & 10.125 & 10.161 & 10.202   \\
\hline
\end{tabular}
\caption{The computed $\gamma_m$ in 2D with respect $T$ for $\gamma = 1.1,3.5,6.8,10.2$ by using "Algorithm IP".}
\label{Tab1}
\end{table}

\end{document}